\documentclass[11pt,a4paper]{article}

\usepackage{etoolbox}
\usepackage[
  top=1in,
  bottom=1in,
  left=0.75in,
  right=0.75in
]{geometry}
\usepackage[T1]{fontenc}
\usepackage[utf8]{inputenc}
\usepackage{lmodern}

\usepackage{amsmath,amssymb,amsthm,mathtools}
\usepackage{enumitem}
\usepackage{hyperref}
\usepackage[nameinlink,noabbrev]{cleveref}
\usepackage{float}

\usepackage{tikz}
\usepackage{tikz-cd}

\newtheoremstyle{mytheorem}
  {12pt}    
  {0pt}    
  {\itshape} 
  {}       
  {\bfseries} 
  {.}      
  {0.5em}  
  {}       

\theoremstyle{mytheorem}
\newtheorem{theorem}{Theorem}[section]

\newtheorem{corollary}[theorem]{Corollary}

\theoremstyle{definition}
\newtheorem{definition}[theorem]{Definition}
\newtheorem{example}[theorem]{Example}

\newcommand{\cluster}[1]{%
\tikz[baseline=(X.base)]{
\node[
    draw,
    circle,
    inner sep=1pt,
    minimum size=1.8em
] (X) {\strut #1};
}}

\title{The Exact Degree Threshold for Complement Connectivity}
\author{
Arsen Berk Tekdaş\\
\small Department of Mathematics, Bilkent University\\
\small 06800 Ankara, Türkiye\\
\small \texttt{berk.tekdas@bilkent.edu.tr}
}
\date{}

\BeforeBeginEnvironment{proof}{\vspace{-0.6em}}

\begin{document}

\maketitle

\begin{abstract}
\noindent
For a graph $G$, we investigate sufficient degree conditions under which
either $G$ or its complement $\overline{G}$ is $k$-connected. Motivated
by a theorem of Hellwig and Volkmann, we introduce the notion of
complement connectivity admissible pairs and define the parameter
$\mathcal{C}(k)$ as the minimum degree threshold guaranteeing that either
$G$ or $\overline{G}$ is $k$-connected whenever $\delta(G)\ge\mathcal{C}(k)$ and
$\Delta(G)\le |V(G)|-1-\mathcal{C}(k)$.
We prove that $\mathcal{C}(k)=2k-2$ for every $k\ge2$, thereby
determining the exact degree threshold. As an application, we derive
a Nordhaus--Gaddum type product condition, showing that for
$n\ge4k-5$, the condition $\delta(G)\delta(\overline{G})
\ge (2k-3)(n-2k+2)$ guarantees that either $G$ or $\overline{G}$ is $k$-connected.
Finally, we show that no analogous product condition depending only on $k$ can exist.
\end{abstract}

\section{Introduction}

Throughout this paper, all graphs are finite and simple. For a graph $G$, we write $\overline{G}$ for its complement, $\delta(G)$ and $\Delta(G)$ for its minimum and maximum degree, respectively, and $\kappa(G)$ for its connectivity. We follow the graph-theoretic terminology and notation of Diestel~\cite{Diestel} unless otherwise specified.

The relationship between a graph and its complement has been a central topic in graph theory for several decades. Beginning with the classical work of Nordhaus and Gaddum~\cite{Nordhaus-Gaddum}, numerous results have been established relating graph parameters of a graph and its complement. One of the early works devoted specifically to connectivity is due to Akiyama and Harary~\cite{Akiyama-Harary}, who investigated graphs for which both a graph and its complement satisfy prescribed graph-theoretic properties, including connectivity and $2$-connectivity. A classical result states that either a graph or its complement is connected~\cite{Bondy-Murty}. Hellwig and Volkmann \cite{Hellwig-Volkmann} further investigated the connectivity of a graph and its complement when both graphs are connected and proved that 
\[\kappa(G)+\kappa(\overline{G}) \ge \min\{\delta(G),\delta(\overline{G})\}+1\]

Motivated by these results, we investigate sufficient degree conditions guaranteeing that either a graph or its complement is $k$-connected. As a first step, we prove that if $G$ is a graph on $n\ge4k-1$ vertices satisfying
\[
\delta(G)\ge2k-1
\quad\text{and}\quad
\Delta(G)\le n-2k,
\]
then either $G$ or $\overline{G}$ is $k$-connected. Note that an equivalent condition is
\[
\delta(G)\ge 2k-1
\quad\text{and}\quad
\delta(\overline{G})\ge 2k-1,
\]
and we may use these conditions interchangeably throughout the paper.
We then introduce the following notion to capture this question in a more general setting. 

\begin{definition} 
Let $G=(V,E)$ be a graph with $|V|=n$. Then, a pair $(f,g)$ of functions $f,g:\mathbb{N} \to \mathbb{N}$ is called a \textit{complement connectivity admissible pair} if for any $k \ge 2$, $\delta(G) \ge f(k)$ and $\Delta(G) \le n-1-g(k)$ implies that either $G$ or $\overline{G}$ is $k$-connected. The set of all complement connectivity admissible pairs is denoted by $\text{CCAP}$.
\end{definition} 

Since we are concerned only with the symmetric case $f=g$, we define
$$\mathcal{C}(k) = \min_{(f,f) \in \text{CCAP}}\big\{f(k) \mid \text{$\delta(G)\ge f(k)$ and $\Delta(G) \le n-1-f(k) \implies G$ or $\overline{G}$ is $k$-connected}\big\}$$

Then, as an intermediate step, we determine the exact value of $\mathcal{C}(2)$ before establishing the following result, which is the main result of the paper.

\begin{theorem} \label{C(k)_main} $\mathcal{C}(k)=2k-2$ for any $k\ge 2$.
\end{theorem}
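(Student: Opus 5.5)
The proof has two parts: an upper bound (the degree conditions with $2k-2$ force $G$ or $\overline{G}$ to be $k$-connected) and a construction showing that $2k-3$ does not suffice. Since lowering the threshold only weakens the hypothesis, these two parts together give $\mathcal{C}(k)=2k-2$.

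\emph{Upper bound.} Suppose $\delta(G)\ge 2k-2$ and $\delta(\overline{G})\ge 2k-2$, and assume for contradiction that neither graph is $k$-connected.

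First I will rule out the trivial obstruction that $n\le k$. If $n\le k$, then $\delta(G)+\delta(\overline{G})\le n-1\le k-1<4k-4$ for $k\ge 2$, which is impossible. So some set $S$ with $|S|\le k-1$ separates $G$. Write $V\setminus S=A\cup B$ with $A,B$ nonempty and no $G$-edges between $A$ and $B$.

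Any $a\in A$ has all its $G$-neighbours in $(A\setminus\{a\})\cup S$. Hence $|A|-1+k-1\ge 2k-2$, so $|A|\ge k$, and likewise $|B|\ge k$.

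Next let $T$ with $|T|\le k-1$ separate $\overline{G}$. In $\overline{G}$ every vertex of $A$ is adjacent to every vertex of $B$. Both $A\setminus T$ and $B\setminus T$ are nonempty, because $|A|,|B|\ge k>|T|$. Therefore $(A\cup B)\setminus T$ lies inside a single component of $\overline{G}-T$.

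It follows that some other component $C$ of $\overline{G}-T$ satisfies $C\subseteq S\setminus T$, so $|C|\le k-1$. Take $c\in C$. Its $\overline{G}$-neighbours all lie in $(C\setminus\{c\})\cup T$, which gives
\[2k-2\le \deg_{\overline{G}}(c)\le |C|-1+|T|\le 2k-3,\]
a contradiction. This is the step that carries the argument: the separator $T$ of $\overline{G}$ is forced to cut off a piece of the $G$-separator $S$. Once one observes that $A$ and $B$ are completely joined in $\overline{G}$, the count is immediate.

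\emph{Sharpness.} For $k\ge 2$ let $V=S\cup X\cup Y$ be disjoint with $|S|=k-1$, $|X|=2k-2$, $|Y|=2k-3\ge 1$, and fix $T\subseteq X$ with $|T|=k-1$. Define $G$ as follows:
\begin{itemize}
\item $X$ and $Y$ are cliques;
\item there are no $X$--$Y$ edges;
\item $S$ is independent;
\item each $s\in S$ is adjacent to every vertex of $(X\setminus T)\cup Y$ and to no vertex of $T$.
\end{itemize}
Then $S$ separates $X$ from $Y$ in $G$. In $\overline{G}$ the vertices of $S$ are adjacent only to vertices of $S\cup T$, so $T$ separates $S$ from $(X\setminus T)\cup Y$. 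Both separators have size $k-1$, so neither $G$ nor $\overline{G}$ is $k$-connected.

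The degrees are as follows.
\begin{itemize}
\item In $G$: a vertex of $T$ has degree $|X|-1=2k-3$; a vertex of $X\setminus T$ has degree $|X|-1+|S|=3k-4$; a vertex of $Y$ has degree $|Y|-1+|S|=3k-5$; a vertex of $S$ has degree $|X\setminus T|+|Y|=3k-4$. Each is at least $2k-3$ for $k\ge 2$.
\item In $\overline{G}$: a vertex of $S$ has degree $|S|-1+|T|=2k-3$; a vertex of $X\setminus T$ has degree $|Y|=2k-3$; a vertex of $T$ has degree $|Y|+|S|=3k-4$; a vertex of $Y$ has degree $|X|=2k-2$.
\end{itemize}
Hence $\delta(G),\delta(\overline{G})\ge 2k-3$, which shows that $2k-3$ is not an admissible threshold. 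Combined with the upper bound, this gives $\mathcal{C}(k)=2k-2$.
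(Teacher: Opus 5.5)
Your proof is correct, and both halves take a different route from the paper's.

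\textbf{Upper bound.} The paper splits into two cases. If $G$ (say) is disconnected, its components have at least $2k-1$ vertices, so the spanning complete multipartite subgraph of $\overline{G}$ survives the deletion of any $k-1$ vertices. If both $G$ and $\overline{G}$ are connected, it quotes the Hellwig--Volkmann inequality $\kappa(G)+\kappa(\overline{G})\ge\min\{\delta(G),\delta(\overline{G})\}+1$. Your argument is elementary and uniform: a disconnected $G$ or $\overline{G}$ is just the case $S=\emptyset$ or $T=\emptyset$. The sides of a $G$-separator of size at most $k-1$ have at least $k$ vertices each and are completely joined in $\overline{G}$. So a small $\overline{G}$-separator $T$ can only cut off a piece of $S\setminus T$, and the vertices there have $\overline{G}$-degree at most $2k-3$. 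This is essentially the paper's own preliminary argument for $\mathcal{C}(k)\le 2k-1$ with the counting sharpened. Sides of size at least $k$ still exceed $|T|$, and a vertex of $S\setminus T$ has only $|S\cup T|-1\le 2k-3$ possible $\overline{G}$-neighbours inside $S\cup T$. Your route therefore shows the appeal to Hellwig--Volkmann is unnecessary, and you also handle the degenerate case $n\le k$ explicitly.

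\textbf{Sharpness.} The paper uses a self-complementary graph on $4k-4$ vertices. It is a blow-up of $P_4$ with cliques of size $k-1$ at the ends and independent sets of size $k-1$ in the middle, and has $\delta=2k-3$ and $\Delta=2k-2$. Self-complementarity means only one separator has to be exhibited. Your example on $5k-6$ vertices is not self-complementary, so you must check separators and degrees in both $G$ and $\overline{G}$, which you do correctly. It is slightly larger but equally valid, and both examples reduce to $P_4$ when $k=2$.
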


Finally, as an application of Theorem~\ref{C(k)_main}, we derive a
Nordhaus--Gaddum type product condition. More precisely, we show that
for $n\ge4k-5$, the condition $\delta(G)\delta(\overline{G})
\ge (2k-3)(n-2k+2)$
guarantees that either $G$ or $\overline{G}$ is $k$-connected.
We also show that no analogous product condition depending only on
$k$ can exist by constructing graphs for which
$\delta(G)\delta(\overline{G})$ is arbitrarily large while neither
$G$ nor $\overline{G}$ is $k$-connected.

The remainder of the paper is organized as follows. In
Section~2, we establish preliminary results that motivate and prepare the proof of the main theorem. Section~3 is devoted to the proof of Theorem~\ref{C(k)_main}, including the extremal construction that establishes the lower bound. In Section~4, we apply the main theorem to obtain a Nordhaus--Gaddum type product condition and show that no analogous condition depending only on $k$ can guarantee that either $G$ or $\overline{G}$ is $k$-connected.

\section{Preliminary Results}

In this section, we establish an upper bound for $\mathcal{C}(k)$, which will later be used to restrict the possible values of $\mathcal{C}(k)$. We then show, by determining $\mathcal{C}(2)$, that this bound is not tight in general.

\begin{theorem} \label{C(k)_upperbound} $\mathcal{C}(k) \le 2k-1$ for any $k\ge 2$.
\end{theorem}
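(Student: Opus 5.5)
We argue by contradiction, using only the degree conditions $\delta(G)\ge 2k-1$ and $\delta(\overline{G})\ge 2k-1$, the latter being equivalent to $\Delta(G)\le n-2k$. The size assumption comes for free: any vertex $v$ has $\deg_G(v)+\deg_{\overline{G}}(v)=n-1$, so $n-1\ge 4k-2$, i.e. $n\ge 4k-1>k$. Hence a graph on $V$ fails to be $k$-connected exactly when it has a vertex separator of size at most $k-1$.

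So assume neither $G$ nor $\overline{G}$ is $k$-connected.

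\textbf{Separations.} Take a separator $S$ of $G$ with $|S|\le k-1$. Partition $V\setminus S$ into nonempty sets $A,B$ with no $G$-edges between them. Then every $A$--$B$ pair is an edge of $\overline{G}$.

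Similarly, take a separator $T$ of $\overline{G}$ with $|T|\le k-1$. Partition $V\setminus T$ into nonempty $C,D$ with no $\overline{G}$-edges between them. Then every $C$--$D$ pair is an edge of $G$.

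\textbf{Case 1: $A\setminus T$ and $B\setminus T$ are both nonempty.}
\begin{itemize}
\item In $\overline{G}-T$, the sets $A\setminus T$ and $B\setminus T$ are completely joined, so $(A\cup B)\setminus T$ is connected in $\overline{G}-T$.
\item Therefore $(A\cup B)\setminus T$ lies entirely within one side of the $\overline{G}$-separation; say it lies in $C$.
\item Then $D\subseteq V\setminus(T\cup A\cup B)\subseteq S\setminus T$, so $|D|\le |S|$.
\item Pick $v\in D$. Its $\overline{G}$-neighbours lie in $(D\setminus\{v\})\cup T$. Hence
\[
\deg_{\overline{G}}(v)\le |D|-1+|T|\le (k-1)-1+(k-1)=2k-3,
\]
which contradicts $\delta(\overline{G})\ge 2k-1$.
\end{itemize}

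\textbf{Case 2: one of $A\setminus T$, $B\setminus T$ is empty.} By symmetry, say $A\subseteq T$, so $|A|\le k-1$.
\begin{itemize}
\item Pick $a\in A$. Its $G$-neighbours lie in $(A\setminus\{a\})\cup S$. Hence
\[
\deg_G(a)\le (k-2)+(k-1)=2k-3,
\]
which contradicts $\delta(G)\ge 2k-1$.
\end{itemize}

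\textbf{Main obstacle.} The key step is the observation in Case 1 that the complete join between $A$ and $B$ in $\overline{G}$ forces one side of the $\overline{G}$-separation to sit inside the small set $S$. Once that is seen, both cases reduce to simple degree counting.

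The argument actually yields the contradiction already under the weaker bound $2k-2$. So the real difficulty of Theorem~\ref{C(k)_main} must lie in its extremal construction, or in making the threshold independent of $n$. For the present statement, the case analysis above suffices and gives $\mathcal{C}(k)\le 2k-1$.
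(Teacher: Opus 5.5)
Your proof is correct, and it is essentially the paper's argument run as a contradiction. Your Case~2 is the paper's observation that no side of the $G$-separation can fit inside $T$. Your Case~1 combines the paper's other two steps: the complete join makes $\overline{G}[(V\setminus S)\setminus T]$ connected, and no vertex of $S\setminus T$ can have all its $\overline{G}$-neighbours inside $S\cup T$. Your closing remark is also correct. Your degree counts give $2k-3$, so the same argument already proves the upper bound $\mathcal{C}(k)\le 2k-2$ of Theorem~\ref{C(k)_main} without invoking Hellwig--Volkmann. The paper's proof would also give this if its counts were sharpened to $|C_i|\ge k$ and $|(S\cup T)\setminus\{s\}|\le 2k-3$.
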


\begin{proof}
Suppose that $G$ is not $k$-connected. Then, there exists $S\subset V$ with $|S|\le k-1$ such that $G-S$ is disconnected. Let $C_1,\dots, C_r$ be the components of $G-S$, where $r\ge 2$. Here, we claim that $|C_i| \ge k+1$ for each $i$. To prove this claim, let $x\in V(C_i)$. As $C_i$ is a component of $G-S$, neighbors of $x$ must lie in $V(C_i) \cup S$. Hence 
\[
d_G(x) \le |S|+|C_i|-1.
\]
We also know that $d_G(x) \ge 2k-1$, so 
\[
|S|+|C_i|-1 \ge 2k-1.
\]
Together with $|S|\le k-1$, we establish the claim that $|C_i| \ge k+1$.

Now, let $T \subset V$ such that $|T|\le k-1$. Since each $|C_i| \ge k+1$ in $G-S$ and there are no edges between any pairs of $C_i$'s, the induced subgraph
\[
H:=\overline{G}\left[\bigcup_{i=1}^r C_i\right]
\]
of $\overline{G}$ contains a spanning complete $r$-partite graph with parts
$C_1,\ldots,C_r$. As $|T|\le k-1$ and $|C_i|\ge k+1$ for each $i$, $H-T$ is still complete $r$-partite with no empty partitions and hence is connected. So, it remains to show that every vertex of $S\setminus T$ has a neighbor in $H-T$. Let $s \in S\setminus T$. Then, as $|S|,|T|\le k-1$ and $d_G(s) \le n-2k$, we have 
\[
d_{\overline{G}}(s) = n-1-d_G(s) \ge 2k-1 > |S\cup T|.
\]
So, $s$ must be adjacent to a vertex in $V\setminus (S\cup T) = V(H-T)$ in $\overline{G}$. Therefore, $\overline{G}-T$ is connected and thus $\overline{G}$ is $k$-connected.
\end{proof}

Note that these bounds are not tight even for $k=2$ as the following theorem shows.

\begin{theorem} \label{thm_k2}  $\mathcal{C}(2)=2$.
\end{theorem}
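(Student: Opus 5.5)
Two things have to be shown: that $\delta(G)\ge 2$ and $\delta(\overline{G})\ge 2$ force $G$ or $\overline{G}$ to be $2$-connected (the upper bound), and that $\mathcal{C}(2)>1$ (the lower bound).

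\textbf{Lower bound.} I would exhibit a graph with $\delta(G)\ge1$ and $\delta(\overline{G})\ge1$ such that neither $G$ nor $\overline{G}$ is $2$-connected. The path $P_4$ works: it has minimum degree $1$ and maximum degree $2=n-1-1$. It has cut vertices, and it is self-complementary, so $\overline{G}\cong P_4$ also has cut vertices. Hence $\mathcal{C}(2)\ge 2$.

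\textbf{Upper bound: setup.} Assume $G$ is not $2$-connected. Since $\delta(G)\ge 2$, we have $n\ge 3$, and by the same token for $\overline{G}$ we have $n\ge 5$, so disconnection is the only way $2$-connectivity can fail. If $G$ is disconnected, every component has at least $3$ vertices because $\delta\ge2$. Then $\overline{G}$ contains a spanning complete multipartite graph with all parts of size at least $3$, and deleting one vertex leaves it connected, so $\overline{G}$ is $2$-connected. Otherwise $G$ has a cut vertex $s$. Let $C_1,\dots,C_r$ ($r\ge2$) be the components of $G-s$. Each vertex $x\in C_i$ has $d_G(x)\le |C_i|$, so $|C_i|\ge 2$. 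I will show that $\overline{G}-t$ is connected for every vertex $t$.

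\textbf{Upper bound: the cases.}
\begin{itemize}
\item \emph{$t\ne s$.} Then $\overline{G}[\bigcup C_i]-t$ still contains a complete multipartite graph with nonempty parts, since each part has size at least $2$, so it is connected. It remains to attach $s$. Because $d_{\overline{G}}(s)\ge 2$, the vertex $s$ has a $\overline{G}$-neighbor other than $t$, and that neighbor lies in $\bigcup C_i$.
\item \emph{$t=s$.} Then $\overline{G}-s=\overline{G}[\bigcup C_i]$ contains a complete multipartite graph with at least two nonempty parts, which is connected.
\end{itemize}
Note that this argument uses only $|C_i|\ge 2$ and $d_{\overline{G}}(s)\ge2$, so the apparent obstacle of a part of size $1$ being removed by $t$ never arises.

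\textbf{Main obstacle.} The only delicate point is the small and degenerate cases. The condition $\delta(\overline{G})\ge2$ already forces $n\ge5$, and I would check that the setup above is valid for such $n$. Combining the two bounds gives $\mathcal{C}(2)=2$.
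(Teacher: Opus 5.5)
Your proof is correct, but for the upper bound it takes a different route from the paper.

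\textbf{The paper's route.} The paper invokes Menger's theorem. In the cut-vertex case it builds, for every vertex $t$, two internally disjoint $s$--$t$ paths in $\overline{G}$. It splits into cases according to whether $st\in E(G)$ and where the two $G$-non-neighbours of $s$ lie; these non-neighbours are supplied by $\Delta(G)\le n-3$.

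\textbf{Your route.} You check the vertex-deletion definition directly. $\overline{G}-s$ contains a spanning complete multipartite graph whose parts $C_i$ have size at least $2$, so it stays connected after removing any single vertex $t$. The vertex $s$ is then reattached, because its at least two $\overline{G}$-neighbours cannot all equal $t$. Connectivity of $\overline{G}$ itself follows from this and $\delta(\overline{G})\ge 1$.

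\textbf{What each buys.} Your argument is shorter and shows exactly where $\Delta(G)\le n-3$ is used. The paper's version gives explicit independent paths for every pair of vertices, at the cost of the case analysis.

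\textbf{How your argument generalizes.} Your argument is the paper's proof of $\mathcal{C}(k)\le 2k-1$ with one unit of slack removed: $s$ only needs to avoid $(S\cup T)\setminus\{s\}$, not all of $S\cup T$. The same sharpened count works for every $k$. With $\delta(G)\ge 2k-2$, the components of $G-S$ have at least $k$ vertices. Also $s$ has at least $2k-2>2k-3\ge |S|+|T|-1$ neighbours in $\overline{G}$. So this direct argument yields $\mathcal{C}(k)\le 2k-2$ without the Hellwig--Volkmann theorem.

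\textbf{Lower bound.} Your $P_4$ example is exactly the $k=2$ instance of the paper's self-complementary construction in Section 3. It makes explicit what the paper only calls clear.
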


\begin{proof}
Suppose that $G$ is not $2$-connected. Then, we shall show that any two vertices in $\overline{G}$ are connected by at least two independent paths. Then the result follows by Menger's Theorem~\cite{Menger}. 

First, consider the case where $G$ is disconnected and let $C_1$ and $C_2$ be two components of $G$. Since $\delta(G) \ge 2$, we have $|V(C_1)|,\,|V(C_2)| \ge 3$. Here, let $u$ and $v$ be two arbitrary vertices of $G$.
\begin{itemize}
    \item If $u,v \in V(C_1)$, then we can choose two vertices $x,y \in V(C_2)$. Then $uxv$ and $uyv$ are two independent $u$-$v$ paths in $\overline{G}$.
    \item If $u\in V(C_1)$, $v\in V(C_2)$, taking $x\in V(C_1)$ and $y\in V(C_2)$, $uv$ and $uyxv$ are two independent $u$-$v$ paths in $\overline{G}$.
\end{itemize}

Next, consider the case where $\kappa(G) = 1$, that is, $G$ has a cut vertex, say $s$. Let $C_1,\dots,C_r$ be the components of $G-s$. Then, each component has at least $2$ vertices because $\delta(G)\ge 2$. Then, between any pair of vertices that does not contain $s$, there are at least two independent paths in $\overline{G}$ by the previous case. So, it is enough to show that there are at least two independent $s$-$t$ paths in $\overline{G}$ for an arbitrary vertex $t \in V(G)$. Without loss of generality, let $t \in V(C_1)$.
\begin{itemize}
    \item If $t$ is not adjacent to $s$ in $G$, then there must be another vertex $u$ that is not adjacent to $s$ in $G$ as $\Delta(G) \le n-3$. Here, if $u$ is not adjacent to $t$ in $G$, then $st$ and $sut$ are two independent $s$-$t$ paths in $\overline{G}$. Otherwise, since $t \in V(C_1)$, we have $u\in V(C_1)$. Then, taking $x\in V(C_2)$, $st$ and $suxt$ are two independent $s$-$t$ paths in $\overline{G}$.
    \item If $t$ is adjacent to $s$ in $G$, then there are two distinct vertices $x$ and $y$ that are not adjacent to $s$ in $G$, since $\Delta(G)\le n-3$. If $x,y\in V(C_1)$, then taking two distinct vertices $z,w$ from some component $C_j$ with $j\ne1$, $sxzt$ and $sywt$ are two independent $s$-$t$ paths in $\overline{G}$. If $x,y\notin V(C_1)$, then $sxt$ and $syt$ are two independent $s$-$t$ paths in $\overline{G}$. Finally, if exactly one of $x$ and $y$ belongs to $C_1$, say $x\in V(C_1)$ and $y\in V(C_j)$ for some $j\ne1$, then taking $z\in V(C_j)\setminus\{y\}$, $sxzt$ and $syt$ are two independent $s$-$t$ paths in $\overline{G}$.
\end{itemize} 
Since it is clear that $\mathcal{C}(2) > 1$, the result follows.
\end{proof}

\section{The Main Result}

In this section, we prove Theorem~\ref{C(k)_main}. Before giving the proof, we introduce the following convention for the figures.

Let $X=\{x_1,\dots,x_n\}$ and $Y=\{y_1,\dots,y_n\}$ be vertex sets. Throughout the figures, the symbol $\cluster{X}$ denotes the cluster $X$. A loop incident with $\cluster{X}$ indicates that the subgraph induced by $X$ is complete. An edge between $\cluster{X}$ and $\cluster{Y}$ indicates that every edge in $E(X,Y)$ is present.

\label{C(k)_main_proof}
\begin{proof}[Proof of Theorem~\ref{C(k)_main}] Given any $k\ge 2$, first we shall show that $\mathcal{C}(k) \ge 2k-2$ by constructing a graph $G=(V,E)$ with $\delta(G) \ge 2k-3$ and $\Delta(G) \le n-2k+2$ such that neither $G$ nor $\overline{G}$ is $k$-connected.

Let $V=A\sqcup B\sqcup C\sqcup D$ where $A=\{a_1,\dots,a_{k-1}\}$, $B=\{b_1,\dots,b_{k-1}\}$, $C=\{c_1,\dots,c_{k-1}\}$, $D=\{d_1,\dots,d_{k-1}\}$ and 
\[
E(G)=\{b_ib_j \mid 1\le i<j\le k-1\} \cup \{d_id_j \mid 1\le i<j\le k-1\} \cup E(A,B) \cup E(A,C) \cup E(C,D)
\]

Then
\[
E(\overline{G}) = \{a_ia_j \mid 1\le i<j\le k-1\} \cup \{c_ic_j \mid 1\le i<j\le k-1\} \cup E(A,D) \cup E(B,C) \cup E(B,D)
\]
One can see the construction of $G$ and $\overline{G}$ in the figure below.

\begin{figure}[htbp]
    \centering
    \includegraphics[width=15cm]{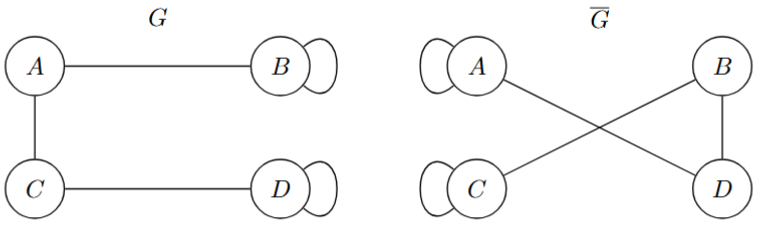}
    \caption{The construction of $G$ and $\overline{G}$ establishing $\mathcal{C}(k) \ge 2k-2$}
\end{figure}

Here, $\delta(G)=2k-3$, $\Delta(G)=2k-2$ and it follows immediately from the construction that $G$ is self-complementary, that is $G\cong \overline{G}$, with the isomorphism given by $a_i \mapsto d_i$, $b_i \mapsto a_i$, $c_i \mapsto b_i$, $d_i \mapsto c_i$, for any $1\le i\le k-1$. Now, since $G-A$ is disconnected with $|A|=k-1$, $G$ is not $k$-connected. As $G$ is self-complementary, $\overline{G}$ is not $k$-connected.

It remains to prove the reverse inequality $\mathcal{C}(k)\le 2k-2$. Suppose for a contradiction that there exists a graph $G$ on $n$ vertices satisfying $\delta(G)\ge 2k-2$ and $\Delta(G)\le n-2k+1$ such that neither $G$ nor $\overline{G}$ is $k$-connected. We consider two cases.

First, consider the case where $G$ or $\overline{G}$ is disconnected. Without loss of generality, assume that $G$ is disconnected. Let $C_1,\dots,C_r$ be its components. Then, in $\overline{G}$, every vertex of $C_i$ is adjacent to every vertex of $C_j$ when $i\ne j$. Hence, $\overline{G}$ contains a complete $r$-partite graph $K$ as a spanning subgraph. Now, let $v\in C_i$. Then, since $\delta(G) \ge 2k-2$ implies that $\Delta(\overline{G}) \le n-2k+1$, we have
\begin{equation}
    d_{\overline{G}}(v) \le n-2k+1
\end{equation}
and since $v$ can be adjacent to only the vertices of $C_i$ in $G$, we have $n-1-d_{\overline{G}}(v)=d_G(v) \le |C_i|-1$, and hence
\begin{equation}
    n-|C_i| \le d_{\overline{G}}(v)
\end{equation}
Combining (1) and (2), we obtain $|C_i| \ge 2k-1$. Since the choice of $v$ was arbitrary, this holds for every $i=1,\dots,r$. Now, let $T\subseteq V(G)$ with $|T|\le k-1$. Since each $|C_i|\ge 2k-1$, every set
$C_i\setminus T$ is nonempty. It follows that the spanning complete
$r$-partite subgraph of $\overline G-T$ with parts $C_1\setminus T,\dots,C_r\setminus T$ still has all parts nonempty, and therefore is connected. Hence $\overline G-T$ is connected. Therefore, $\overline G$ is $k$-connected. So, this case is not possible since we assumed that neither $G$ nor $\overline{G}$ is $k$-connected.

Next, consider the case where both $G$ and $\overline{G}$ are connected. Then, since
\[
\kappa(G)+\kappa(\overline{G}) \le (k-1)+(k-1)=2k-2
\]
and $\delta(G), \delta(\overline{G}) \ge 2k-2$, we have
\[
\min\{\delta(G), \delta(\overline{G})\}+1 \ge 2k-1
\]
which contradicts the theorem of Hellwig and Volkmann.

Thus, $\mathcal{C}(k)\le 2k-2$. Together with the lower bound, we conclude that $\mathcal{C}(k)=2k-2$.
\end{proof}

\section{An Application: A Nordhaus--Gaddum Type Product Condition}

Having determined the exact value of $\mathcal{C}(k)$, we now derive a Nordhaus–Gaddum type product condition as an application of our main result.

\begin{corollary}
Let $G$ be a graph on $n$ vertices and $k\ge 2$, where $n\ge 4k-5$. If $\delta(G)\delta(\overline{G}) \ge (2k-3)(n-2k+2)+1$, then either $G$ or $\overline{G}$ is $k$-connected.
\end{corollary}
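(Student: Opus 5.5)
The plan is to reduce the product condition to the degree condition of Theorem~\ref{C(k)_main}. That theorem says that $\delta(G)\ge 2k-2$ and $\delta(\overline{G})\ge 2k-2$ (equivalently $\Delta(G)\le n-1-(2k-2)$) together force $G$ or $\overline{G}$ to be $k$-connected. So it suffices to show that the hypothesis $\delta(G)\delta(\overline{G})\ge (2k-3)(n-2k+2)+1$ implies $\min\{\delta(G),\delta(\overline{G})\}\ge 2k-2$.

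Write $a=\delta(G)$ and $b=\delta(\overline{G})$. The key constraint is $b=n-1-\Delta(G)\le n-1-a$, so $a+b\le n-1$. By symmetry between $G$ and $\overline{G}$ (the hypothesis and conclusion are both symmetric), assume $a\le b$. Suppose for contradiction that $a\le 2k-3$. Then $b\le n-1-a$, so
\[
ab\le a(n-1-a).
\]
The function $h(x)=x(n-1-x)$ is increasing for $x\le (n-1)/2$. The hypothesis $n\ge 4k-5$ gives $(n-1)/2\ge 2k-3$, so on $0\le a\le 2k-3$ we get
\[
h(a)\le h(2k-3)=(2k-3)(n-2k+2).
\]
This contradicts $ab\ge (2k-3)(n-2k+2)+1$. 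Hence $a\ge 2k-2$, and therefore also $b\ge a\ge 2k-2$.

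With both minimum degrees at least $2k-2$, Theorem~\ref{C(k)_main} (specifically its upper bound $\mathcal{C}(k)\le 2k-2$) applies and gives the conclusion. A degenerate edge case is also covered: if $a=0$, the product is $0$ and the hypothesis fails.

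There is no real obstacle here. The only point needing care is the monotonicity step, where $n\ge 4k-5$ is used precisely so that $2k-3$ lies on the increasing side of the parabola $x(n-1-x)$. One must also invoke $a+b\le n-1$ rather than just $b\le n-1$, since the weaker bound would not suffice.
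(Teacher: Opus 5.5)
Your proof is correct and is essentially the paper's argument. The paper writes it as a contrapositive: if neither $G$ nor $\overline{G}$ is $k$-connected, Theorem~\ref{C(k)_main} forces a minimum degree of at most $2k-3$, and then $\delta(G)\delta(\overline{G})\le \delta(G)(n-1-\delta(G))\le (2k-3)(n-2k+2)$ by the same monotonicity step using $n\ge 4k-5$. You run that bound forward to show both minimum degrees are at least $2k-2$ and then invoke the theorem.
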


\begin{proof} 
We prove the contrapositive. Suppose that neither $G$ nor $\overline{G}$ is $k$-connected. Then, by Theorem \ref{C(k)_main}, either $\delta(G) \le 2k-3$ or $\delta(\overline{G}) \le 2k-3$. Without loss of generality, assume that $\delta(G) \le 2k-3$. Since 
\[
\delta(\overline{G})=n-1-\Delta(G) \le n-1-\delta(G),
\] 
we have 
\[
\delta(G)\delta(\overline{G}) \le \delta(G)(n-1-\delta(G)),
\]
where the RHS is an increasing function of $\delta(G)$ when $\displaystyle \delta(G) \le \frac{n-1}{2}$. As $n\ge 4k-5$ and $\delta(G) \le 2k-3$, it follows that the RHS attains its maximum value when $\delta(G)=2k-3$, that is,  
\[
\delta(G)\delta(\overline{G}) \le \delta(G)(n-1-\delta(G)) \le (2k-3)(n-2k+2)
\]
and thus the result follows.
\end{proof}

Now, we focus on the equality case. From the argument used in the proof of the corollary, equality can occur only when $\delta(G)=2k-3$ and $\delta(\overline{G})=n-1-\delta(G)$. Since $\delta(\overline{G})=n-1-\Delta(G)$,
it follows that
\[
\delta(G)=\Delta(G)=2k-3.
\]
In other words, equality can occur only when $G$ is $(2k-3)$-regular.
The following structural result eliminates this equality case and
allows us to strengthen the bound found in  the previous corollary.

\begin{theorem} \label{regularity_theorem}
Let $k\ge2$, and let $G$ be a $(2k-3)$-regular graph on
$n\ge4k-5$ vertices. Then either $G$ or $\overline{G}$ is
$k$-connected.
\end{theorem}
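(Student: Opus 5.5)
The plan is to argue by contradiction, reducing to the equality case of the Hellwig--Volkmann inequality and then examining the extremal structure directly. First I record a parity observation: since $G$ is $(2k-3)$-regular and $2k-3$ is odd, $n$ must be even. Hence $n\ge 4k-5$ upgrades to $n\ge 4k-4$. Then $\overline G$ is $(n-2k+2)$-regular with $n-2k+2\ge 2k-2$.

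\textbf{Disconnected cases.} These go exactly as in the proof of Theorem~\ref{C(k)_main}.
\begin{itemize}
\item If $G$ is disconnected, every component has at least $2k-2\ge k$ vertices, since each vertex has $2k-3$ neighbours inside its component. So $\overline G$ contains a spanning complete multipartite graph, and removing at most $k-1$ vertices leaves every part nonempty. Thus $\overline G$ is $k$-connected.
\item If $\overline G$ is disconnected, each of its components has at least $n-2k+3\ge 2k-1$ vertices. The same argument shows that $G$ is $k$-connected.
\end{itemize}

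\textbf{Both connected.} Suppose now that $G$ and $\overline G$ are both connected and neither is $k$-connected. Hellwig--Volkmann gives
\[
\kappa(G)+\kappa(\overline G)\ge \min\{2k-3,\,n-2k+2\}+1=2k-2,
\]
so necessarily $\kappa(G)=\kappa(\overline G)=k-1$. Fix a separator $S$ of $G$ and a separator $T$ of $\overline G$, with $|S|=|T|=k-1$, and let $C_1,\dots,C_r$ ($r\ge2$) be the components of $G-S$. Each vertex $x\in C_i$ satisfies $2k-3\le |S|+|C_i|-1$, so $|C_i|\ge k-1$.

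Suppose first that no $C_i$ is contained in $T$. Then the sets $C_i\setminus T$ are all nonempty, so $\overline G-T-(S\setminus T)$ contains a connected spanning complete multipartite graph. Each $s\in S\setminus T$ has $d_{\overline G}(s)=n-2k+2\ge 2k-2>|S\cup T|-1$. Hence $s$ has an $\overline G$-neighbour outside $S\cup T$, and $\overline G-T$ is connected, a contradiction.

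So some component, say $C_1$, lies inside $T$. Since $|C_1|\ge k-1=|T|$, this forces $T=C_1$ and $|C_1|=k-1$. Regularity then forces $C_1$ to be a clique in $G$ completely joined to $S$.

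\textbf{The remaining configuration.} Now $\overline G-T=\overline G[S\cup C_2\cup\dots\cup C_r]$, and I must show it is connected. If $r\ge3$, the previous argument applies verbatim, because the parts $C_2,\dots,C_r$ are untouched by $T$. The main obstacle is $r=2$, where I need $\overline{H}$ to be connected for $H=G[S\cup C_2]$, on $m=n-k+1\ge 3k-3$ vertices.

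Suppose $\overline H$ is disconnected. Then $V(H)=X\sqcup Y$ with all $X$--$Y$ edges present in $H$. In $H$, each vertex of $S$ has degree at most $(2k-3)-(k-1)=k-2$, because it is already adjacent in $G$ to all of $C_1$. Hence if $s\in S$ lies in $X$, then $|Y|\le k-2$, and so $|X|\ge m-k+2\ge 2k-1$. But every vertex of $Y$ is adjacent in $G$ to all of $X$, giving it degree at least $2k-1>2k-3$. This contradicts regularity, so $\overline G-T$ is connected, contradicting the choice of $T$. This completes the plan.
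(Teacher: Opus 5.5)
Your proof is correct and follows essentially the same route as the paper: components of $G-S$ have at least $k-1$ vertices, a set $T$ that swallows a component must equal it (forcing that component to be a clique fully joined to $S$), and the case $r=2$ is ruled out by a degree count in $\overline{G}[S\cup C_2]$. The differences are cosmetic: you invoke Hellwig--Volkmann to fix $|S|=|T|=k-1$, whereas the paper simply enlarges $S_0$ and quantifies over every $T$; and your bipartition $X\sqcup Y$ in the last step skips the paper's intermediate claim that all of $S$ lies in one component of $\overline{G}[S\cup B]$.
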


\begin{proof}
First, note that if $n=4k-5$, then both $n$ and $2k-3$ are odd, so no $(2k-3)$-regular graph on $n$ vertices exists by the Handshaking Lemma. Hence, we may assume that $n\ge4k-4$.

Suppose that $G$ is not $k$-connected. Then there exists a set
$S_0\subseteq V(G)$ with $|S_0|\le k-1$ such that $G-S_0$ is
disconnected. For every component $C$ of $G-S_0$ and every
$v\in V(C)$, we have
\[
2k-3=d_G(v)\le |C|-1+|S_0|,
\]
and so
\[
|C|\ge 2k-2-|S_0|.
\]
Thus, $S_0$ may be enlarged to a set $S$ of size $k-1$ without deleting an entire component, and $G-S$ remains disconnected.

Let $C_1,\ldots,C_r$, where $r\ge2$, be the components of $G-S$.
For any $v\in V(C_i)$, all neighbors of $v$ in $G$ lie in
$V(C_i)\cup S$. Hence,
\[
2k-3=d_G(v)\le |C_i|-1+|S|=|C_i|-1+(k-1),
\]
and therefore $|C_i|\ge k-1$, for every $i=1,\ldots,r$.

We shall show that $\overline{G}$ is $k$-connected. Let
$T\subseteq V(G)$ be arbitrary with $|T|\le k-1$, and set
\[
U=V(G)\setminus(S\cup T)
 =\bigcup_{i=1}^{r}\bigl(V(C_i)\setminus T\bigr).
\]

First, suppose that at least two of the sets $V(C_i)\setminus T$
are nonempty. Since there are no edges in $G$ between distinct
components of $G-S$, the graph $\overline{G}[U]$ contains a spanning
complete multipartite graph whose nonempty parts are the sets
$V(C_i)\setminus T$. Hence, $\overline{G}[U]$ is connected.

It remains to connect the vertices of $S\setminus T$ to $U$. Let
$x\in S\setminus T$. If $x$ had no neighbor in $U$ in $\overline{G}$,
then $x$ would be adjacent to every vertex of $U$ in $G$. But this means
\[
d_G(x)
\ge |U|
= n-|S|-|T|+|S\cap T|
\ge n-|S|-|T|
\ge n-2k+2
\ge 2k-2,
\]
whereas $d_G(x)=2k-3$, a contradiction. Therefore, every vertex of
$S\setminus T$ has a neighbor in $U$ in $\overline{G}$, and hence
$\overline{G}-T$ is connected.

It remains to consider the case in which at most one of the sets
$V(C_i)\setminus T$ is nonempty. Since $r\ge2$, the set $T$ must
contain all vertices of at least one component, say $C_j$. Since
$|C_j|\ge k-1$ and $|T|\le k-1$, we obtain
\[
|C_j|=|T|=k-1
\qquad\text{and}\qquad
T=V(C_j).
\]
Moreover, $T$ cannot contain all vertices of two distinct components,
since each component has at least $k-1$ vertices while $|T|=k-1$.
Since at most one of the sets $V(C_i)\setminus T$ is nonempty, we obtain that $r=2$. Denote these
components by $A$ and $B$, where
\[
|A|=|S|=|T|=k-1
\qquad\text{and}\qquad
T=A.
\]

Let $a\in A$. Since $A$ is a component of $G-S$, $a$ is not adjacent to any vertex of $B$ in $G$. Thus, all neighbors of $a$ lie in
$(A\setminus\{a\})\cup S$,
which contains exactly $(k-2)+(k-1)=2k-3$ vertices. Since $d_G(a)=2k-3$, $a$ is adjacent to every
vertex of this set. Consequently, $G[A]$ is complete and every
vertex of $A$ is adjacent to every vertex of $S$.

We now show that $\overline{G}-A=\overline{G}[S\cup B]$
is connected. Suppose for a contradiction that it is disconnected.
For each $s\in S$, let $Q_s$ be the component of
$\overline{G}[S\cup B]$ containing $s$. Since $s$ is adjacent to all $k-1$ vertices of $A$ in $G$, it has at most $(2k-3)-(k-1)=k-2$ neighbors in $S\cup B$. Moreover, every vertex of
$(S\cup B)\setminus Q_s$ is adjacent to $s$ in $G$, since there are
no edges of $\overline{G}$ between distinct components. Therefore,
\[
|(S\cup B)\setminus Q_s|\le k-2,
\]
and hence
\[
|Q_s|
\ge |S\cup B|-(k-2)
= n-2k+3
\ge 2k-1.
\]

Here, we claim that all vertices of $S$ lie in the same component of
$\overline{G}[S\cup B]$. Suppose that $s,t\in S$ lie in distinct
components $Q_s$ and $Q_t$. Since
$Q_t\subseteq (S\cup B)\setminus Q_s$,
we have $|Q_t|\le k-2$. On the other hand, applying the preceding
bound to $t$ gives $|Q_t|\ge2k-1$, which is a contradiction establishing the claim. Thus, all
vertices of $S$ belong to a single component, say $Q$.

Since $\overline{G}[S\cup B]$ is disconnected, the set $R=(S\cup B)\setminus Q$ is nonempty. Since $S\subseteq Q$, we have $R\subseteq B$. For any
$s\in S$, every vertex of $R$ is adjacent to $s$ in $G$. Moreover, we know that
$s$ is adjacent to every vertex of $A$ in $G$. Therefore,
\[
|A|+|R| = k-1+|R| \le d_G(s)=2k-3,
\]
which yields 
\[
|R|\le k-2.
\]

Now choose $b\in R$. Since $b$ and the vertices of $Q$ lie in
distinct components of $\overline{G}[S\cup B]$, the vertex $b$ is
adjacent to every vertex of $Q$ in $G$. Hence
\[
|Q| \le d_G(b) = 2k-3.
\]
However,
\[
|Q|
=|S\cup B|-|R|
=n-(k-1)-|R|
\ge n-2k+3
\ge 2k-1,
\]
which is a contradiction.

Thus, $\overline{G}-T$ is connected for every
$T\subseteq V(G)$ with $|T|\le k-1$. Therefore,
$\overline{G}$ is $k$-connected, and the result follows.
\end{proof}

The following corollary is immediate.

\begin{corollary} \label{main_application} Let $G$ be a graph on $n$ vertices and $k\ge 2$, where $n\ge 4k-5$. If $\delta(G)\delta(\overline{G}) \ge (2k-3)(n-2k+2)$, then either $G$ or $\overline{G}$ is $k$-connected.
\end{corollary}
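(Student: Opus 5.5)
We argue by contrapositive, refining the argument of the previous corollary with Theorem~\ref{regularity_theorem} to handle the equality case. Suppose neither $G$ nor $\overline{G}$ is $k$-connected. By Theorem~\ref{C(k)_main}, we cannot have both $\delta(G)\ge 2k-2$ and $\delta(\overline{G})\ge 2k-2$. Since the hypothesis and conclusion are symmetric in $G$ and $\overline{G}$, we may assume $\delta(G)\le 2k-3$.

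Next we bound the product exactly as in the previous corollary. Using $\delta(\overline{G})=n-1-\Delta(G)\le n-1-\delta(G)$, we get
\[
\delta(G)\delta(\overline{G})\le \delta(G)\bigl(n-1-\delta(G)\bigr).
\]
The function $x(n-1-x)$ is increasing for $x\le (n-1)/2$. Since $n\ge 4k-5$, we have $2k-3\le (n-1)/2$, so the right-hand side is at most $(2k-3)(n-2k+2)$. Combined with the hypothesis $\delta(G)\delta(\overline{G})\ge (2k-3)(n-2k+2)$, every inequality in this chain must be an equality.

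We then extract the equality conditions. First suppose $k\ge 3$, so that $2k-3>0$ and $(2k-3)(n-2k+2)>0$. Then $\delta(G)\ge 1$. Strict monotonicity of $x(n-1-x)$ on $[0,(n-1)/2]$ forces $\delta(G)=2k-3$. Equality in $\delta(\overline{G})\le n-1-\delta(G)$ then gives $\Delta(G)=\delta(G)$. Hence $G$ is $(2k-3)$-regular on $n\ge 4k-5$ vertices, and Theorem~\ref{regularity_theorem} says $G$ or $\overline{G}$ is $k$-connected, a contradiction.

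The main obstacle is this equality extraction, in particular the degenerate case $k=2$. There the bound is $0$, so the hypothesis is vacuous and the statement would claim that every graph on $n\ge 3$ vertices has $G$ or $\overline{G}$ $2$-connected. I would check this directly. For instance, $P_3$ fails, since neither $P_3$ nor its complement $K_2\cup K_1$ is $2$-connected. So the $k=2$ case needs care: either the statement implicitly requires $k\ge 3$, or it is to be read with the earlier hypotheses $\delta\ge 1$ in force. I would treat $k\ge 3$ as the substantive case as above. For $k=2$, I would note that regularity with $2k-3=1$ still feeds into Theorem~\ref{regularity_theorem} once $\delta(G)=1$ is forced, and flag that the case $\delta(G)=0$ must be excluded separately.
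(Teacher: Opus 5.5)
Your main argument follows the paper's route and is correct. You take the contrapositive bound from the preceding corollary, observe that equality in $\delta(G)\delta(\overline{G})\le\delta(G)(n-1-\delta(G))\le(2k-3)(n-2k+2)$ forces $\delta(G)=\Delta(G)=2k-3$, and then invoke Theorem~\ref{regularity_theorem}.

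The error is in your last paragraph, where you treat $k=2$ as degenerate. For $k=2$ we have $2k-3=1$, not $0$. So the bound is $(2k-3)(n-2k+2)=n-2\ge 1$, since $n\ge 4k-5=3$, and the hypothesis is not vacuous. Your proposed counterexample $P_3$ does not satisfy it: $\delta(P_3)=1$ and $\delta(\overline{P_3})=\delta(K_2\cup K_1)=0$, so the product is $0<1=n-2$. There is therefore nothing to ``flag''. The statement holds for $k=2$ with no extra hypotheses, and the case $\delta(G)=0$ is excluded automatically because it makes the product $0<n-2$. As written, though, your proposal leaves $k=2$ unproved and asserts a false counterexample.

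In fact your ``$k\ge 3$'' argument already covers $k=2$ verbatim. The only place you use $k\ge 3$ is $2k-3>0$, which you need to cancel $\delta(G)=2k-3$ and conclude $\delta(\overline{G})=n-1-\delta(G)$. That inequality holds for every $k\ge 2$. For $k=2$ it yields that $G$ (or $\overline{G}$) is $1$-regular, i.e.\ a perfect matching on $n\ge 4$ vertices. Its complement is $(n-2)$-connected, consistent with Theorem~\ref{regularity_theorem}. So drop the case split and the $k=2$ discussion, and let the single argument run for all $k\ge 2$.
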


Here, a natural question to ask is whether we can make the bound on $\delta(G)\delta(\overline{G})$ independent of $n$ and, ideally, dependent only on $k$. The answer to this question is negative as the following construction demonstrates.

\begin{example} Given $k\ge 3$, let $G=(V,E)$ such that $V=A \sqcup B\sqcup S$ where 
\[
A=\{a_1,\dots,a_r\},\, B=\{b_1,\dots,b_r\},\, S=\{s_1,\dots,s_{k-1}\}
\]
with $r\ge k$ and 
\[
E(G)=\{a_ia_j \mid 1\le i<j \le r\} \cup \{b_ib_j \mid 1\le i<j \le r\} \cup E(S,A) \cup E(S,B)
\]
Then,
\[
E(\overline{G}) = \{s_is_j \mid 1\le i < j \le k-1\} \cup E(A,B)
\]
One can see the construction of $G$ and $\overline{G}$ in the figure below.

\begin{figure}[htbp]
    \centering
    \includegraphics[width=15.5cm]{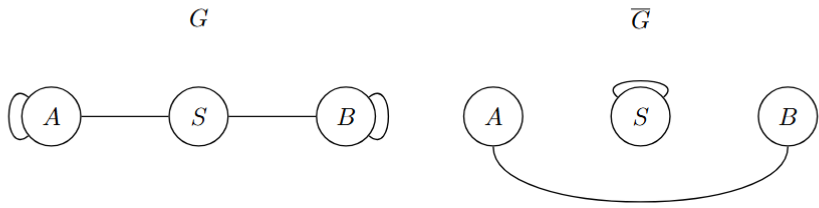}
    \caption{The construction of graphs $G$ and $\overline{G}$ that prove the bound on $\delta(G)\delta(\overline{G})$ cannot be dependent only on $k$}
\end{figure}

So, we have $\delta(G)=r+k-2$ and $\delta(\overline{G}) = k-2$ because $r\ge k$ implies $2r > r+k-2$ and $r > k-2$. Then,
\[
\delta(G)\delta(\overline{G}) = (r+k-2)(k-2)
\]
and thus, we can make the product $\delta(G)\delta(\overline{G})$ arbitrarily large by choosing $r$ large enough. However, neither $G$ nor $\overline{G}$ is $k$-connected since $G-S$ is disconnected and $\overline{G}$ itself is disconnected. Therefore, any bound on $\delta(G)\delta(\overline{G})$ dependent only on $k$ does not guarantee that either $G$ or $\overline{G}$ is $k$-connected.
\end{example}

\noindent\rule{\textwidth}{0.4pt}

\section*{Acknowledgements}

I would like to express my gratitude to the late Walter Kern, who first led me to the complement connectivity problem in 2019, when, as an Erasmus student, I took his Graph Theory course and he asked me about the case $k=2$. The proof of Theorem~\ref{thm_k2} is the same argument that I presented to him at the time.

\section*{Tool and Computational Resource Disclosure}

The author used OpenAI's ChatGPT (GPT-5.6) for language editing and for improving the presentation and exposition of the manuscript. ChatGPT was also used to assist in developing the argument showing that $\overline{G}-A$ is connected in the proof of Theorem~\ref{regularity_theorem}. The mathematical argument and all AI-assisted text were reviewed, verified, and incorporated into the manuscript by the author, who takes full responsibility for the content.

\bibliographystyle{unsrt}
\bibliography{references}

\end{document}